\definecolor{bluey}{rgb}{0,0,0.6}
\newtheorem{proposition}{Proposition}
\newtheorem{theorem}{Theorem}
\newtheorem{lemma}{Lemma} 
\newtheorem{corollary}{Corollary}
\def\thmhead@plain#1#2#3{%
  \thmname{#1}\thmnumber{\@ifnotempty{#1}{ }\@upn{#2}}%
  \thmnote{ \the\thm@notefont(#3)}}
\let\thmhead\thmhead@plain
\begin{document}
\title{A generalization of Boppana's entropy inequality}
\author{Boon Suan Ho}
\address{Department of Mathematics, National University of Singapore}
\email{\href{mailto:hbs@u.nus.edu}{hbs@u.nus.edu}}

\begin{abstract} 
In recent progress on the union-closed sets conjecture, 
a key lemma has been Boppana's entropy inequality:
$h(x^2)\ge\phi xh(x)$, where $\phi=(1+\sqrt5)/2$ and $h(x)=-x\log x-(1-x)\log(1-x)$.
In this note, we prove that the generalized inequality
$\alpha_kh(x^k)\ge x^{k-1}h(x)$, first conjectured by Yuster, holds for real $k>1$, 
where $\alpha_k$ is the unique positive solution to $x(1+x)^{k-1}=1$. 
This implies an analogue of the union-closed sets conjecture
for approximate $k$-union closed set systems.
We also formalize our proof in Lean 4.
\end{abstract}

\maketitle

\section{Introduction}
The \emph{union-closed sets conjecture} states that for every
family $\{\varnothing\}\ne\mathcal F\subseteq2^{[n]}$ of sets closed
under unions, there exists $i\in[n]$ that is contained in $c=1/2$ of 
the sets of $\mathcal F$. The problem was posed in 1979, but no proof
for any $c>0$ was found until 2022, when Gilmer \cite{gil} gave an information-theoretic
proof for the $c=0.01$ case. His argument was quickly optimized to work for 
$c=(3-\sqrt5)/2\approx0.382$ by several authors, who all made use of \emph{Boppana's entropy inequality} \cite{bop} as a key ingredient:
\begin{proposition}
Let $\log$ denote the natural logarithm, and define the binary entropy function
$h(x)\coloneqq-x\log x-(1-x)\log(1-x)$ for $0<x<1$, setting $h(0)=h(1)=0$. 
Then $h(x^2)\ge\phi xh(x)$ for $0\le x\le1$, where $\phi=(1+\sqrt5)/2$,
and equality holds iff $x=0$, $1/\phi$, or $1$.
\end{proposition}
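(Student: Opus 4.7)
The plan is to show $F(x) := h(x^2) - \phi x h(x) \geq 0$ on $[0,1]$, with equality only at $0$, $1/\phi$, $1$. The boundary vanishings $F(0) = F(1) = 0$ are immediate from $h(0) = h(1) = 0$. The interior zero at $1/\phi$ comes from the defining identity $\phi^2 = \phi + 1$, which rearranges to $1/\phi^2 = 1 - 1/\phi$; combined with the symmetry $h(y) = h(1-y)$, this gives $h(1/\phi^2) = h(1/\phi)$, so $F(1/\phi) = h(1/\phi)\bigl(1 - \phi \cdot 1/\phi\bigr) = 0$. (This reflects the ``symmetric zero'' relation $x^2 + x = 1$ at the equality point, which one expects to generalize to $x^k + x = 1$ in the $k$-version of the inequality.)

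Since $1/\phi$ is an interior zero and the inequality claims $F \geq 0$, it must also be a critical point. Using $h'(y) = \log\frac{1-y}{y}$,
\[ F'(x) = 2x \log\frac{1-x^2}{x^2} - \phi h(x) - \phi x \log\frac{1-x}{x}, \]
and a direct substitution --- using $1 + 1/\phi = \phi$ and the explicit value $h(1/\phi) = (\log\phi)(\phi+2)/\phi^2$ --- confirms $F'(1/\phi) = 0$. So $1/\phi$ is at least a double zero of $F$.

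To conclude $F \geq 0$, it remains to show that $1/\phi$ is the unique interior critical point. This is the main obstacle. A promising reformulation uses the identity
\[ h(x^2) = (1+x)h(x) - (1-x)\bigl[(1+x)\log(1+x) - x\log x\bigr], \]
which follows from $1-x^2 = (1-x)(1+x)$ by direct expansion, to rewrite the target as
\[ (\phi - x)\, h(x) \geq \phi(1-x)\bigl[(1+x)\log(1+x) - x\log x\bigr], \]
both sides now manifestly non-negative on $(0,1)$. The remaining transcendental terms can then be handled by further differentiation, exploiting that $h''(y) = -1/(y(1-y))$ is rational, to reduce the sign question to a polynomial inequality verifiable by explicit manipulation --- the natural format for the Lean 4 formalization mentioned in the abstract.
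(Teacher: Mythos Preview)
Your central claim --- that $1/\phi$ is the unique interior critical point of $F(x)=h(x^2)-\phi x h(x)$ --- is not merely the ``main obstacle''; it is false. Since $F(0)=F(1/\phi)=F(1)=0$ and $F$ is differentiable on $(0,1)$, Rolle's theorem forces a critical point strictly inside $(0,1/\phi)$ and another strictly inside $(1/\phi,1)$. Together with $F'(1/\phi)=0$ this already gives at least three interior critical points, so uniqueness is impossible and the argument collapses at that step. The paper sidesteps this by working with the \emph{ratio} $q(x)=xh(x)/h(x^2)$ rather than the difference: after taking limits one has $q(0)=q(1)=1/2$ while $q(1/\phi)=1/\phi>1/2$, so there is no interior zero and Rolle poses no obstruction. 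The paper then shows that $q'(x)=0$ is equivalent to $U(x)=U(x^2)$ with $U(x)=\log(x)\log(1-x)/h(x)$; since $U$ is symmetric about $1/2$ and strictly monotone on $(0,1/2]$, this forces $x^2=1-x$, i.e.\ $x=1/\phi$, and now uniqueness of the critical point genuinely holds.

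Your identity $h(x^2)=(1+x)h(x)-(1-x)\bigl[(1+x)\log(1+x)-x\log x\bigr]$ is correct and the reformulated inequality $(\phi-x)h(x)\ge\phi(1-x)\bigl[(1+x)\log(1+x)-x\log x\bigr]$ is a legitimate target, but the closing sentence --- that the remaining transcendental terms ``can then be handled by further differentiation \dots\ to reduce the sign question to a polynomial inequality'' --- is a plan, not a proof. All of the actual content would lie in that reduction, and you have not carried any of it out.
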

\noindent We refer the reader to \cite{cam} for further details and references.
In this note, we use standard calculus to prove the following generalization of Boppana's inequality:
\begin{theorem}
Let $k>1$ be real. Then $\alpha_kh(x^k)\ge x^{k-1}h(x)$ for $0\le x\le1$, where
$0<\alpha_k<1$ is the unique positive solution to $x(1+x)^{k-1}=1$,
and equality holds iff $x=0$, $1/(1+\alpha_k)$, or $1$.
\end{theorem}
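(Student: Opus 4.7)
The plan is to study $F(x) := \alpha_k h(x^k) - x^{k-1} h(x)$ on $[0,1]$ and show $F \ge 0$ with equality only at $0$, $x_* := 1/(1+\alpha_k)$, and $1$. The first key step is to reformulate the definition of $\alpha_k$: writing $\alpha_k = (1-x_*)/x_*$, the relation $\alpha_k(1+\alpha_k)^{k-1}=1$ is equivalent to the pair of identities
\[
x_*^{k-1} = \alpha_k \qquad\text{and}\qquad x_*^k = 1-x_*.
\]
Combining these with the symmetry $h(t) = h(1-t)$ immediately yields $F(x_*) = \alpha_k h(1-x_*) - \alpha_k h(x_*) = 0$; obviously $F(0) = F(1) = 0$. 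Using $h'(t) = \log((1-t)/t)$, one computes $h'(x_*^k) = -h'(x_*)$ and $h'(x_*) = (k-1)\log x_*$, and a short calculation then shows $F'(x_*) = 0$, so $x_*$ is a double zero of $F$.

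Next, I would establish boundary positivity: a leading-order expansion gives $F(x) \sim (k\alpha_k - 1)(-x^k \log x)$ as $x \to 0^+$, and similarly $F(x) \sim (k\alpha_k - 1)\bigl(-(1-x)\log(1-x)\bigr)$ as $x \to 1^-$, both strictly positive provided $k\alpha_k > 1$. This last fact is equivalent to $(1+1/k)^{k-1} < k$ for $k > 1$, which I would prove by noting that $f(k) := k\log k - (k-1)\log(k+1)$ satisfies $f(1) = 0$ and $f'(k) = 2/(k+1) - \log(1+1/k) > 0$ via the standard bound $\log(1+u) < u$.

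Finally, I must rule out any further interior zeros. Rolle's theorem applied to the three known zeros of $F$ together with $F'(x_*) = 0$ forces $F'$ to have at least three zeros in $(0,1)$; I want to show it has exactly three, which pins down the sign pattern of $F'$ and, combined with the boundary positivity, gives $F > 0$ on $(0,1) \setminus \{x_*\}$. The main obstacle is this upper bound on the zero count. My plan is to compute $F''$ explicitly using $h''(t) = -1/(t(1-t))$, clear a positive factor of the form $x^{k-2}$ (or a similar tidy multiplier), and reduce the sign question to that of an explicit combination of polynomial and logarithmic terms in $x$ and $x^k$; successive differentiation should eventually yield a monotone expression, capping the zero count of $F''$ at two (hence of $F'$ at three) and closing the argument.
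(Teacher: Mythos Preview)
Your setup through the boundary asymptotics and the verification that $x_*$ is a double zero of $F$ is correct and matches the paper's preliminary computations. The gap is in the final step, where the entire difficulty of the problem resides. After clearing $x^{k-2}$ from $F'(x)$ one obtains
\[
\alpha k x\log(1-x^k)+k(1-\alpha k)\,x\log x+(k-1-kx)\log(1-x),
\]
a combination of the three transcendentals $\log x$, $\log(1-x)$, $\log(1-x^k)$ with coefficients that are themselves non-polynomial when $k$ is real. Differentiating again replaces the logs by rational functions of $x$ and $x^k$ that do not simplify, and nothing is eliminated: you still carry three independent ``species'' at every stage. There is no evident reason why repeated differentiation eventually yields a monotone expression, and your proposal offers no mechanism for it; the phrase ``should eventually'' is a hope, not a method.

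The paper bypasses this zero-counting problem entirely by analysing the quotient $q(x)=x^{k-1}h(x)/h(x^k)$ instead of the difference $F$. Using the identity $t\,h'(t)=h(t)+\log(1-t)$, the equation $q'(x)=0$ collapses---after multiplying through by $h(x^k)^2/x^{k-2}$ and cancelling the common $kh(x)h(x^k)$ term---to the remarkably clean relation $U(x)=U(x^k)$, where $U(t)=\log(t)\log(1-t)/h(t)$. One then observes that $1/U(t)=L(1,t)+L(1,1-t)$ with $L$ the logarithmic mean, and an integral representation shows $L(1,\cdot)$ is strictly concave; hence $U$ is symmetric about $1/2$ and strictly monotone on $(0,1/2]$, so $U(x)=U(x^k)$ forces $x^k=1-x$, i.e.\ $x=x_*$. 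Thus $q$ has a \emph{unique} interior critical point, and since $q(0)=q(1)=1/k<\alpha=q(x_*)$ (this is exactly your inequality $k\alpha_k>1$), one gets $q\le\alpha$ immediately. No bound on the zeros of $F''$ is ever needed; the missing idea is the reduction to $U(x)=U(x^k)$.
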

\noindent Notice that Boppana's inequality is the case $k=2$.
Yuster \cite{yus1} conjectured this inequality for integer $k\ge2$, showing
that it implied a generalization of Gilmer's result to what he called
``approximate $k$-union closed set systems,'' and
proving it for $k=3$ and $k=4$. Consequently,
the following is a corollary of our result:

\begin{corollary}[\cite{yus1}, Conjecture 1.5]
Let $k\ge2$ be an integer and let $0\le c\le1$.
A finite set system $\mathcal F$ is 
$c$-approximate $k$-union closed if for at least
a $c$-fraction of the $k$-tuples $A_1,\dots,A_k\in\mathcal F$,
we have $\bigcup_{i=1}^kA_i\in\mathcal F$.

Let $\{\varnothing\}\ne\mathcal F\subseteq2^{[n]}$ be a
$(1-\epsilon)$-approximate $k$-union closed set system,
where $0\le\epsilon<1/2$. Then there exists an element
contained in an $\alpha_k/(1+\alpha_k)-\delta$ fraction of sets in $\mathcal F$,
where $\delta=(k\epsilon+2\epsilon\log(1/\epsilon)/\log|\mathcal F|)^{1/(k-1)}$.
\end{corollary}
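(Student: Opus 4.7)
My plan is to adapt Gilmer's information-theoretic argument to $k$-fold unions, using Theorem 1 in place of Boppana's inequality as the coordinate-wise tool; Yuster carries out this reduction in \cite{yus1} for $k = 3, 4$, and the argument extends uniformly once Theorem 1 is available for arbitrary real $k > 1$. Sample $A_1, \ldots, A_k$ independently and uniformly from $\mathcal{F}$, set $U \coloneqq A_1 \cup \cdots \cup A_k$, and write $p_i \coloneqq \Pr[i \in A_1]$ and $q_i \coloneqq 1 - p_i$ for each $i \in [n]$. I argue contrapositively: assume $q_i > 1/(1+\alpha_k) + \delta$ for every $i$, and derive a contradiction by bounding $H(U)$ in two different ways.

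For the \emph{upper bound}, split by whether $U \in \mathcal{F}$:
\begin{equation*}
H(U) \le H\!\big(\mathbf{1}_{\{U \in \mathcal{F}\}}\big) + H\!\big(U \bigm| \mathbf{1}_{\{U \in \mathcal{F}\}}\big) \le h(\epsilon) + (1-\epsilon)\log|\mathcal{F}| + \epsilon k \log|\mathcal{F}|,
\end{equation*}
where on the event $U \notin \mathcal{F}$ I bound $H(U \mid U \notin \mathcal{F}) \le k\log|\mathcal{F}|$ using that $U$ is determined by $(A_1,\ldots,A_k)$. Since $h(\epsilon) \le 2\epsilon\log(1/\epsilon)$, this rearranges to $H(U) - \log|\mathcal{F}| \le \delta^{k-1}\log|\mathcal{F}|$ with $\delta$ as in the statement.

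For the \emph{lower bound}, use the chain rule together with the inclusion $\sigma(U_{<i}) \subseteq \sigma((A_1)_{<i}, \ldots, (A_k)_{<i})$ to write
\begin{equation*}
H(U) = \sum_i H(U_i \mid U_{<i}) \ge \sum_i H\!\big(U_i \bigm| (A_1)_{<i}, \ldots, (A_k)_{<i}\big).
\end{equation*}
Conditional on the $k$ prefixes the bits $(A_j)_i$ are independent Bernoullis with parameters $P_{i,j} \coloneqq \Pr[(A_j)_i = 1 \mid (A_j)_{<i}]$, which are i.i.d.\ in $j$; hence the inner term equals $\mathbb{E}[h(\prod_j(1 - P_{i,j}))]$ after using $h(y) = h(1-y)$. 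Applying Theorem 1 with $x = 1 - P_{i,j}$, the $\alpha_k^{-1} q_i^{k-1}$ prefactor that emerges combines with the chain-rule identity $\sum_i \mathbb{E}[h(P_{i,1})] = H(A_1) = \log|\mathcal{F}|$, the assumption $q_i > 1/(1+\alpha_k) + \delta$, and the defining relation $\alpha_k(1+\alpha_k)^{k-1} = 1$ to produce a lower bound strictly exceeding the upper bound above.

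The main obstacle is the reduction from the i.i.d.-in-$j$ expression $\mathbb{E}[h(\prod_j X_j)]$ (with $X_j = 1 - P_{i,j}$) to the single-variable form of Theorem 1: Jensen gives only the wrong-direction bound $\mathbb{E}[h(\prod_j X_j)] \le h(q_i^k)$, since $h$ is concave. The resolution, present already in Gilmer's original $k = 2$ argument and in Yuster's $k = 3, 4$ extensions, is to apply Theorem 1 pointwise inside the expectation to extract a factor $\alpha_k^{-1}(1 - P_{i,1})^{k-1}$, and then use an FKG- or rearrangement-type monotonicity step (exploiting the i.i.d.\ structure in $j$) to replace the random factor with the marginal $q_i^{k-1}$. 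Once this averaging is made rigorous the two bounds on $H(U)$ collide and give the advertised $\delta$.
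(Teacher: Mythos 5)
First, a point of comparison: the paper does not actually reprove this corollary. It is obtained by combining Theorem~1 with the reduction already carried out by Yuster in \cite{yus1}, who shows for every integer $k\ge2$ that the entropy inequality implies the set-system statement (what he establishes only for $k=3,4$ is the entropy inequality itself, not the reduction). Your overall architecture --- the two competing bounds on $H(U)$, the chain rule, the passage to the conditional expression $\mathbb{E}\bigl[h\bigl(\prod_j(1-P_{i,j})\bigr)\bigr]$, and the bookkeeping that produces the stated $\delta$ --- matches that reduction.

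There is, however, a genuine gap at exactly the step you flag as the main obstacle, and your proposed resolution does not work. Theorem~1 is a statement about the diagonal, comparing $h(x^k)$ with $x^{k-1}h(x)$; it cannot be ``applied pointwise'' to the product $\prod_j(1-P_{i,j})$ of $k$ generally distinct values, so no factor $\alpha_k^{-1}(1-P_{i,1})^{k-1}$ ``emerges.'' An FKG or rearrangement step is not available either: FKG requires monotone functions, and $h$ is not monotone on $[0,1]$. Indeed, taking $X$ uniform on a two-point set $\{a,b\}$ with $k=2$ shows that the needed bound $\mathbb{E}[h(X_1X_2)]\ge\phi\,\mathbb{E}[X]\,\mathbb{E}[h(X)]$ reduces, term by term, to the diagonal instances at $a$ and $b$ \emph{plus} the genuinely off-diagonal inequality $h(ab)\ge\tfrac{\phi}{2}\bigl(ah(b)+bh(a)\bigr)$; near the equality point $a=1/(1+\alpha_k)$ there is no diagonal slack to borrow, so no averaging trick invoking only Theorem~1 pointwise can close this. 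What is actually needed is the multivariable inequality
\begin{equation*}
h(x_1\cdots x_k)\ \ge\ \frac{1}{k\alpha_k}\sum_{j=1}^k\Bigl(\prod_{l\ne j}x_l\Bigr)h(x_j),
\end{equation*}
after which independence and linearity of expectation immediately give $\mathbb{E}\bigl[h\bigl(\prod_jX_j\bigr)\bigr]\ge\alpha_k^{-1}\mathbb{E}[X]^{k-1}\mathbb{E}[h(X)]$. This does follow from Theorem~1, but via a separate lemma: writing $p=\prod_jx_j$, the right-hand sum equals $p\sum_jh(x_j)/x_j$, and the concavity of $t\mapsto e^{-t}h(ce^{t})$ (which boils down to $-\log(1-u)\le u/(1-u)$) shows that it is maximized on the diagonal $x_1=\cdots=x_k=p^{1/k}$, where Theorem~1 applies. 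You need to supply this lemma, or cite the corresponding step of \cite{yus1} explicitly, for the argument to be complete.
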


Later, Yuster and Yashfe \cite{yus2}
proved the inequality for integer $5\le k\le20$.
Wakhare \cite{wak} investigated the generalization to real $k>1$.

\section{The proof}
Fix real $k>1$ and write $\alpha=\alpha_k$. 
Define
$q(x)\coloneqq x^{k-1}h(x)/h(x^k)$ on $(0,1)$, 
and extend $q$ to $0$ and $1$ by taking limits,
so that $q(0)=q(1)=1/k$ (\hyperref[lemma:l1]{Lemma 1}).
Our goal is to show that $q(x)\le\alpha$.
Since $\alpha=1/(1+\alpha)^{k-1}$, we have 
$$q\Bigl({1\over1+\alpha}\Bigr)
={1\over(1+\alpha)^{k-1}}\cdot
{h\bigl({1\over1+\alpha}\bigr)\over 
h\bigl({1\over(1+\alpha)^k}\bigr)}
=\alpha\cdot{h\bigl({1\over1+\alpha}\bigr)\over 
h\bigl({\alpha\over1+\alpha}\bigr)}=\alpha,$$
where we used the fact that $h(x)=h(1-x)$ in the last step.

We will complete the proof by showing that $q'(x)=0$ if and only if $x=1/(1+\alpha)$;
since $q$ is differentiable on $(0,1)$, continuous on $[0,1]$, and $q(1/(1+\alpha))=\alpha>1/k=q(0)=q(1)$ (\hyperref[lemma:l2]{Lemma 2}), this will imply that $q(x)\le\alpha$ as needed (\hyperref[lemma:l3]{Lemma 3}).

Suppose $q'(x)=0$. 
Since
\begin{equation}
q'(x)={(k-1)x^{k-2}h(x)+x^{k-1}h'(x)\over h(x^k)}-x^{k-1}h(x){h'(x^k)kx^{k-1}\over h(x^k)^2},
\end{equation}
the condition $q'(x)=0$ is equivalent (after multiplying by $h(x^k)^2/x^{k-2}$) to
\begin{equation} \label{eq:t}
\Bigl((k-1)h(x)+xh'(x)\Bigr)h(x^k)=kx^kh(x)h'(x^k).
\end{equation}
Since $h'(x)=\log(1-x)-\log x$, we have $xh'(x)-h(x)=\log(1-x)$,
so 
\begin{equation}
xh'(x)=h(x)+\log(1-x)\quad\text{and}\quad x^kh'(x^k)=h(x^k)+\log(1-x^k).
\end{equation}
Substituting into \eqref{eq:t} then yields
\begin{equation}
\Bigl(kh(x)+\log(1-x)\Bigr)h(x^k)=kh(x)\Bigl(h(x^k)+\log(1-x^k)\Bigr),
\end{equation}
or
\begin{equation}
\log(1-x)h(x^k)=kh(x)\log(1-x^k).
\end{equation}
Multiplying by $\log x$ and dividing by $h(x)h(x^k)$ then gives
\begin{equation}
{\log(x)\log(1-x)\over h(x)}={k\log(x)\log(1-x^k)\over h(x^k)},
\end{equation}
and since $\log(x^k)=k\log x$, this equation becomes 
\begin{equation} \label{eq:main}
U(x)=U(x^k),
\end{equation}
where
\begin{equation}
U(x)={\log(x)\log(1-x)\over h(x)}.
\end{equation} 
Since $U(x)=U(1-x)$, and
since $U$ is strictly decreasing on $(0,1/2]$ (\hyperref[lemma:l4]{Lemma 4}),
it follows that every value of $U$ is attained at exactly
two points $x$ and $1-x$ (except at $x=1/2$, where they coincide).
Thus $U(x)=U(x^k)$ implies $x^k=x$ or $x^k=1-x$. Since $k>1$
and $0<x<1$, it follows that $x^k=1-x$, which in turn implies
that $x=1/(1+\alpha)$ (\hyperref[lemma:l5]{Lemma 5}). This completes the proof.

\section{Lemmas}
\begin{lemma} \label{lemma:l1}
We have $\lim_{x\to 0^+}q(x)=\lim_{x\to1^-}q(x)=1/k$.
\end{lemma}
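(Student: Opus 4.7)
The plan is to compute both limits via leading-order asymptotics of $h$ at its endpoints. Near $0$, since $-(1-x)\log(1-x)=x+O(x^2)$, we have $h(x)=-x\log x+O(x)$; combined with $x=o(x\log x)$ as $x\to 0^+$, this yields $h(x)\sim -x\log x$.

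For the $x\to 0^+$ limit, I would substitute directly: the numerator satisfies $x^{k-1}h(x)\sim -x^k\log x$, while the denominator satisfies $h(x^k)\sim -x^k\log(x^k)=-kx^k\log x$, so the ratio $q(x)\to 1/k$.

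For the $x\to 1^-$ limit, I would exploit the symmetry $h(x)=h(1-x)$, which is immediate from the definition. Setting $y=1-x\to 0^+$, we get $h(x)=h(y)\sim -y\log y$. For the denominator, $h(x^k)=h(1-x^k)$ where $1-x^k=1-(1-y)^k\sim ky$ as $y\to 0^+$; this holds for real $k>1$ via $(1-y)^k=\exp(k\log(1-y))=1-ky+O(y^2)$. Hence $h(x^k)\sim -(ky)\log(ky)\sim -ky\log y$, since the constant $\log k$ is negligible against $\log y\to-\infty$. Combining with $x^{k-1}\to 1$ gives $q(x)\to(-y\log y)/(-ky\log y)=1/k$.

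No substantive obstacle arises; the only care required is in making the $\sim$ relations rigorous. This can be done either by tracking explicit error terms as above, or by applying L'H\^opital's rule to ratios such as $h(x)/(-x\log x)$ and $h(1-x^k)/(-ky\log y)$ to reduce the claim to an elementary limit computation.
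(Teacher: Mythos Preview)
Your proof is correct and follows essentially the same approach as the paper: both arguments rest on the leading-order asymptotic $h(x)\sim -x\log x$ as $x\to 0^+$, applied directly for the $x\to 0^+$ limit and combined with the symmetry $h(x)=h(1-x)$ for the $x\to 1^-$ limit. The paper leaves the $x\to 1^-$ case as ``similar,'' while you spell out the substitution $y=1-x$ and the expansion $1-x^k\sim ky$ explicitly, but the underlying idea is identical.
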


\begin{proof}
The $x\to0^+$ case follows from making the asymptotic estimate
$h(x)=-x\log x(1+o(1))$, since then
$h(x^k)=-x^k\log(x^k)(1+o(1))=-kx^k\log x(1+o(1))$ and
\begin{equation}
q(x)={x^{k-1}h(x)\over h(x^k)}={x^{k-1}x\log x(1+o(1))\over kx^k\log x(1+o(1))}\to{1\over k};
\end{equation}
a similar approach works for $x\to1^-$ by using the symmetry $h(1-x)=h(x)$.
\end{proof}

\begin{lemma} \label{lemma:l2}
We have $\alpha>1/k$.
\end{lemma}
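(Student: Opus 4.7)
The plan is to reduce this to an elementary inequality in $k$ and finish with a one-line calculus argument. Since $f(x)\coloneqq x(1+x)^{k-1}$ is strictly increasing on $[0,\infty)$ with $f(\alpha)=1$, the claim $\alpha>1/k$ is equivalent to $f(1/k)<1$, which rearranges to
\begin{equation*}
(k+1)^{k-1}<k^k.
\end{equation*}
Taking logarithms, I would set $h(k)\coloneqq k\log k-(k-1)\log(k+1)$; since $h(1)=0$, it suffices to verify that $h'(k)>0$ on $(1,\infty)$.

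A short computation yields
\begin{equation*}
h'(k)=\frac{2}{k+1}-\log\Bigl(1+\frac{1}{k}\Bigr),
\end{equation*}
and the standard estimate $\log(1+u)<u$ applied with $u=1/k$ bounds this from below by $\tfrac{2}{k+1}-\tfrac{1}{k}=\tfrac{k-1}{k(k+1)}$, which is strictly positive for $k>1$. Hence $h$ is strictly increasing on $[1,\infty)$, so $h(k)>h(1)=0$ for $k>1$, which is what we wanted.

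The main (and essentially only) obstacle is spotting the reformulation $(k+1)^{k-1}<k^k$; once that is in hand, the rest is routine, and the crude bound $\log(1+u)<u$ controls $h'$ with room to spare, consistent with the fact that $(1+1/k)^{k-1}/k\to0$ as $k\to\infty$.
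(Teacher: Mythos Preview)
Your argument is correct and essentially identical to the paper's: both reduce the claim via monotonicity of $f(x)=x(1+x)^{k-1}$ to $(k+1)^{k-1}<k^k$, and the paper then says this ``follows from taking logarithms and doing calculus,'' which is exactly the computation you carry out (the paper also notes a weighted AM--GM alternative). One cosmetic point: avoid reusing the symbol $h$ for your auxiliary function, since in this paper $h$ already denotes the binary entropy.
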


\begin{proof}
Recall that $\alpha(1+\alpha)^{k-1}=1$. Set $f(x)=x(1+x)^{k-1}$.
Then $f$ is strictly increasing on $(0,\infty)$, as can be seen
by considering $f'$. Thus it suffices to check that $f(1/k)=(1/k)(1+1/k)^{k-1}<1$,
or $(k+1)^{k-1}<k^k$. This follows from taking logarithms and doing calculus,
or alternatively from the weighted AM-GM inequality with numbers $x_1=1$, $x_2=k+1$
and weights $w_1=1$, $w_2=k-1$.
\end{proof}

\begin{lemma} \label{lemma:l3}
Given $M>0$, $0<a<1$, and a continuous function $f\colon[0,1]\to\mathbf R$
that is differentiable on $(0,1)$, suppose that $f(0)<M$, $f(a)=M$, $f(1)<M$, and
$f'(x)=0$ iff $x=a$. Then $f(x)\le M$ with equality iff $x=a$. 
\end{lemma}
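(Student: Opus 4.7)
My plan is to invoke the extreme value theorem and then use the hypothesis that $a$ is the only critical point to pin down where the maximum can occur.

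First, since $f$ is continuous on the compact interval $[0,1]$, it attains a maximum at some point $x^{\ast}\in[0,1]$. I would break into cases. If $x^{\ast}\in\{0,1\}$, then the maximum value is $f(x^{\ast})<M$ by hypothesis, contradicting $f(a)=M$. Therefore $x^{\ast}\in(0,1)$, so by Fermat's interior extremum theorem (using differentiability on $(0,1)$) we get $f'(x^{\ast})=0$, and the hypothesis forces $x^{\ast}=a$. Hence $\max_{[0,1]}f=f(a)=M$, which gives $f(x)\le M$ for all $x\in[0,1]$.

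For the equality clause, suppose $f(b)=M$ for some $b\in[0,1]$. If $b\in\{0,1\}$ then $f(b)<M$, a contradiction. Otherwise $b\in(0,1)$, and since $f(b)=M$ equals the global maximum just established, $b$ is an interior local maximum, so again $f'(b)=0$ and the hypothesis yields $b=a$. This establishes both $f\le M$ and the equality characterization.

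The proof is essentially a one-liner given the right invocations, so there is no real obstacle; the only care needed is to apply Fermat's theorem only after ruling out the endpoints (since $f$ is not assumed differentiable there), and to separate the two uses of the critical-point hypothesis (once to locate the global max, once to exclude other points from achieving it). I would write it as a short direct argument rather than via contradiction on the whole statement.
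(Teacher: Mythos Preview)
Your argument is correct and is exactly the approach the paper takes: the paper's proof is the single sentence ``This follows from the extreme value theorem and Fermat's theorem,'' and you have simply unpacked that sentence carefully, including the endpoint exclusion and the equality case.
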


\begin{proof}
This follows from the extreme value theorem and Fermat's theorem. 
\end{proof}

\begin{lemma} \label{lemma:l4}
Let $U(x)=\log(x)\log(1-x)/h(x)$. Then $U$ is decreasing on $(0,1/2]$.
\end{lemma}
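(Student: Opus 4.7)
The plan is to recast $U$ in a symmetric form. Multiplying out gives
\begin{equation*}
\frac{1}{U(x)}=\frac{-x\log x-(1-x)\log(1-x)}{\log x\,\log(1-x)}=\frac{x}{-\log(1-x)}+\frac{1-x}{-\log x}=F(x)+F(1-x),
\end{equation*}
where $F(y)\coloneqq y/(-\log(1-y))$ on $(0,1)$. Because the right-hand side is invariant under $x\mapsto 1-x$, we get $(1/U)'(x)=F'(x)-F'(1-x)$. Since $U>0$, proving that $U$ is strictly decreasing on $(0,1/2]$ reduces to showing $(1/U)'(x)>0$ on $(0,1/2)$ and then invoking the mean value theorem to include the endpoint $x=1/2$ (at which $(1/U)'$ vanishes by symmetry). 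Since $x<1-x$ throughout $(0,1/2)$, this reduces the whole task to showing that $F$ is strictly concave on $(0,1)$.

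For the concavity, the cleanest route is to substitute $s=-\log(1-y)$, transforming $F$ into
\begin{equation*}
\tilde F(s)\coloneqq\frac{1-e^{-s}}{s}=\int_0^1 e^{-st}\,dt,\qquad s>0.
\end{equation*}
Differentiating under the integral sign, $\tilde F'(s)=-\int_0^1 te^{-st}\,dt$ and $\tilde F''(s)=\int_0^1 t^2e^{-st}\,dt$. The chain rule, with $ds/dy=1/(1-y)$ and $d^2s/dy^2=1/(1-y)^2$, then gives
\begin{equation*}
F''(y)=\frac{\tilde F''(s)+\tilde F'(s)}{(1-y)^2}=\frac{1}{(1-y)^2}\int_0^1 t(t-1)e^{-st}\,dt,
\end{equation*}
which is strictly negative on $(0,1)$ because $t(t-1)<0$ for $t\in(0,1)$.

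The main obstacle I foresee is the concavity of $F$ itself; a head-on computation of $F''(y)$ in the variable $y$ is unpleasant, and what makes the argument transparent is the substitution $s=-\log(1-y)$ together with the Laplace-type representation $(1-e^{-s})/s=\int_0^1 e^{-st}\,dt$. A more elementary but heavier alternative is to show that $\tilde F'(s)e^s=((s+1)-e^s)/s^2$ is decreasing via the sign-definite Maclaurin expansion $(s+1)-e^s=-\sum_{n\ge 2}s^n/n!$.
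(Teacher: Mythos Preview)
Your proof is correct and follows essentially the same route as the paper: rewrite $1/U(x)$ as the symmetric sum $F(x)+F(1-x)$, reduce to strict concavity of $F$, and establish that via an integral representation whose integrand has the fixed sign of $t(t-1)$. The paper's only streamlining is to recognize $F(1-t)=L(1,t)=\int_0^1 t^s\,ds$ as the logarithmic mean and differentiate directly in $t$, obtaining $f''(t)=\int_0^1 s(s-1)t^{s-2}\,ds<0$ without your substitution $s=-\log(1-y)$ and the attendant chain-rule bookkeeping.
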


\begin{proof}
It suffices to prove that $1/U$ is increasing on $(0,1/2)$.
Since \begin{equation}
{1\over U(x)}={-x\over\log(1-x)}+{-(1-x)\over\log x}=L(1,1-x)+L(1,x)
\end{equation}
where $L(a,b)\coloneqq(a-b)/(\log a-\log b)$ is the \emph{logarithmic mean},
the identity $L(a,b)=\int_0^1a^{1-s}b^s\,ds$ yields
$f(t)\coloneqq L(1,t)=\int_0^1t^s\,ds$. 
Then $f''(t)=\int_0^1s(s-1)t^{s-2}\,ds<0$ on $(0,1)$, 
so $f(t)$ is strictly concave.
Thus $f'(t)$ is strictly decreasing, and we conclude that 
$(1/U(x))'=f'(x)-f'(1-x)>0$ for $x\in(0,1/2)$ as needed.
\end{proof}

\begin{lemma} \label{lemma:l5}
Suppose $x^k=1-x$. Then $x=1/(1+\alpha)$, where $\alpha$ is the unique
positive solution to $\alpha(1+\alpha)^{k-1}=1$.
\end{lemma}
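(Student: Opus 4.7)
The plan is to prove this by first establishing that the equation $x^k = 1 - x$ has at most one solution in $(0,1)$---the range relevant to the preceding argument---and then verifying by direct substitution that $1/(1+\alpha)$ is such a solution.

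For uniqueness, I would introduce the auxiliary function $g(x)\coloneqq x^k + x - 1$ and note that $g(0) = -1 < 0$, $g(1) = 1 > 0$, and $g'(x) = kx^{k-1} + 1 > 0$ on $(0,1)$ since $k > 1$. By strict monotonicity together with the intermediate value theorem, $g$ has exactly one root in $(0,1)$.

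For existence, I would substitute $x = 1/(1+\alpha)$ into both sides of $x^k = 1 - x$. This gives $1 - x = \alpha/(1+\alpha)$ and $x^k = 1/(1+\alpha)^k$, so the equation rewrites as $1/(1+\alpha)^k = \alpha/(1+\alpha)$, which after clearing denominators is exactly $\alpha(1+\alpha)^{k-1} = 1$---the defining equation of $\alpha$. Uniqueness then forces $x = 1/(1+\alpha)$.

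No step is likely to be a serious obstacle: the proof is a direct check together with a one-line monotonicity argument using only elementary calculus. The only minor subtlety is that the lemma as stated does not explicitly restrict $x$ to $(0,1)$, but the hypothesis $0 < x < 1$ is inherited from the containing argument, where the equation $x^k = 1 - x$ arises for some $x \in (0,1)$.
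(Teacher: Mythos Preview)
Your argument is correct and matches the paper's approach: verify by substitution that $x=1/(1+\alpha)$ satisfies $x^k=1-x$, and appeal to uniqueness of the root in $(0,1)$. The paper's proof is terser and leaves the monotonicity/uniqueness step implicit, whereas you spell it out via $g(x)=x^k+x-1$; otherwise the two proofs are the same.
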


\begin{proof}
Substitute $x=1/(1+\alpha)$ into $x^k=1-x$ to get
$1/(1+\alpha)^k=\alpha/(1+\alpha)$, then multiply both sides
by $(1+\alpha)^k$.
\end{proof}

\section{Remarks}
Though this paper was written and checked carefully by hand,
key steps in some proofs were generated with the assistance of GPT-5.2 pro.
The result has also been formalized in Lean 4 using Harmonic Aristotle
and Gemini 3 Pro Preview; the code is available from 
\url{https://github.com/boonsuan/entropy-inequality}.

\section{Acknowledgements}
The author thanks Hao Huang for helpful comments on a draft of this note.


\begin{thebibliography}{10}

\bibitem{wak}
Tanay Wakhare, 
``Iterated entropy derivatives and binary entropy inequalities,'' 
\textsl{Journal of Approximation Theory} 
\textbf{307} (2025), 106143.

\bibitem{yus1}
Raphael Yuster, ``Almost $k$-union closed set systems,'' 
\href{https://arxiv.org/abs/2302.12276}{\texttt{arXiv:2302.12276v1}} \texttt{[math.CO]}

\bibitem{yus2}
Raphael Yuster, ``An entropy inequality and almost $k$-union closed set systems,'' 
Combinatorics seminar abstract (13 Nov 2024), Technion. Accessed January 26, 2026. 
\url{https://math.technion.ac.il/en/events/raphael-yuster/}

\bibitem{gil}
Justin Gilmer, ``A constant lower bound for the union-closed sets conjecture,''
\href{https://arxiv.org/abs/2211.09055}{\texttt{arXiv:2211.09055v2}} \texttt{[math.CO]}

\bibitem{cam}
Stijn Cambie, 
``Progress on the union-closed conjecture and offsprings in winter 2022--2023,''
\href{https://arxiv.org/abs/2306.12351}{\texttt{arXiv:2306.12351v1}} \texttt{[math.CO]}

\bibitem{bop}
Ravi Boppana, ``A Useful Inequality for the Binary Entropy Function,''
\href{https://arxiv.org/abs/2301.09664}{\texttt{arXiv:2301.09664v1}} \texttt{[math.CO]}

\end{thebibliography}
\end{document}